\documentclass[11pt]{amsart}

\usepackage{amsmath,amssymb,amsthm}
\usepackage[colorlinks=true,linkcolor=blue,citecolor=blue,urlcolor=blue]{hyperref}

\theoremstyle{plain}
\newtheorem{theorem}{Theorem}
\newtheorem{lemma}{Lemma}

\newtheorem{case}{Case}
\theoremstyle{remark}
\newtheorem{remark}{Remark}

\title{Residue Restrictions for a Two-Color Partition Series}
\author{Aman Singh}
\address{Department of Mathematics, Indian Institute of Technology, Roorkee-
247667, Uttarakhand, India}
\email{amansingh9839269454@gmail.com}
\date{\today}
\subjclass[2020]{11P83; 11P81; 11P84; 05A17}
\keywords{integer partitions, two-color partitions, congruences, $q$-series, residue restrictions}
\begin{document}
\begin{abstract}
    We study residue restrictions for a two-color partition series $P(q)=(q^4;q^4)_\infty S(q)$ arising from work of Andrews and Bachraoui on partitions with odd smallest part. Motivated by the explicit exponent structure in the Bailey-transform formulas for the associated generating function, we obtain elementary restrictions on the support of the coefficients. In particular, we show that the residue class $4\pmod8$ does not occur, and we prove a further refinement modulo $16$, yielding several vanishing classes for the coefficients of the series. Our argument is completely residue-theoretic and avoids the use of modular completions.
\end{abstract}
\maketitle
\section{Introduction}
\noindent We use the standard $q$-series notation as for instance in~\cite[Chapter 1]{GasperRahman}
\[
(a;q)_0:=1,\qquad (a;q)_n:=\prod_{j=0}^{n-1}(1-aq^j),\qquad
(a;q)_\infty:=\prod_{j=0}^{\infty}(1-aq^j), \quad |q|<1.\]
For a formal power series
\[
P(q)=\sum_{n\ge 0} A(n)q^n, \qquad \text{with } N\ge 1,
\]
and \(0\le r < N\), the sieve operator is defined by
\[
P(q)\big|_{S_{N,r}}=
\sum_{\substack{n\ge 0\\ n\equiv r \,(\mathrm{mod}\, N)}} A(n)q^n.
\]
Andrews and Bachraoui ~\cite{AndrewsBachraoui} studied the series
\[
S(q):=\sum_{m\ge0}\frac{(q;q^2)_{m}^2q^{2m}}{(-q^2;q^2)_{m}}
\] in connection with two-color partitions with odd smallest part and conjectured the following Hecke-type formula
$$ \big((q^4;q^4)_\infty S(q)\big)|_{S_{2,0}}= \sum_{n\ge 0}(-1)^n q^{6n^2+4n}(1+q^{4n+2})\sum_{|j|\le n}(-1)^j q^{-2j^2}.$$
Banerjee and Bringmann \cite{BanerjeeBringmann} proved this conjecture by completing both sides to modular objects and then using Sturm's theorem.  At the end of their paper, they asked whether a more direct proof could be found, for example by using Bailey pairs, and they also suggested studying the odd residue classes. This was subsequently proved in ~\cite{AndrewsBachraoui2}, which yields the corollary $$P(q)|_{S_{4,1}}=0.$$ 
The Bailey-transform identities obtained in \cite{AndrewsBachraoui2}
naturally lead to questions concerning the residue support of the associated series $P(q)$. While Corollary~4 of
\cite{AndrewsBachraoui2} determines the odd residue behavior modulo $4$, the even residue classes appear to admit additional
restrictions. In this short note we record elementary consequences of the exponent structure arising from equations
(31) and (32) of \cite{AndrewsBachraoui2}. In particular, we show that the
class $4 \pmod 8$ does not occur and obtain a refinement modulo $16$.
\vspace{1mm}
\begin{theorem}~\label{main}
We have
\[
P(q)\big|_{S_{8,4}}=0.
\]
\end{theorem}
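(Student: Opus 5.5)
The plan is to read off Theorem~\ref{main} directly from the Hecke-type formula for the even part of $P(q)$, which was conjectured by Andrews and Bachraoui and proved in \cite{BanerjeeBringmann} (and again via Bailey pairs in \cite{AndrewsBachraoui2}). Every $n\equiv 4\pmod 8$ is even, so $P(q)\big|_{S_{8,4}}=\big(P(q)\big|_{S_{2,0}}\big)\big|_{S_{8,4}}$. It therefore suffices to show that no exponent in
\[
\sum_{n\ge 0}(-1)^n q^{6n^2+4n}(1+q^{4n+2})\sum_{|j|\le n}(-1)^j q^{-2j^2}
\]
is congruent to $4$ modulo $8$.

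First I check that the right-hand side is a legitimate formal power series, so that extracting the coefficient of a fixed power of $q$ involves only finitely many terms. Since $|j|\le n$, the exponents satisfy $6n^2+4n-2j^2\ge 4n^2+4n$, and similarly for the second family. Hence for each fixed exponent only finitely many pairs $(n,j)$ contribute, and the sieve $S_{8,4}$ may be applied term by term.

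The main step is a residue computation modulo $8$. The two exponent families are
\[
E_1=6n^2+4n-2j^2 \qquad\text{and}\qquad E_2=6n^2+8n+2-2j^2 .
\]

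For $E_1$:
\begin{itemize}
\item If $n=2m$, then $6n^2+4n=24m^2+8m\equiv 0\pmod 8$.
\item If $n=2m+1$, then $6n^2+4n=24m^2+32m+10\equiv 2\pmod 8$.
\end{itemize}

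For $E_2$, write $6n^2+8n+2=(6n^2+4n)+4n+2$:
\begin{itemize}
\item If $n$ is even, this gives $0+0+2\equiv 2\pmod 8$.
\item If $n$ is odd, this gives $2+4+2\equiv 0\pmod 8$.
\end{itemize}

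In both families, then, the $n$-part of the exponent lies in $\{0,2\}$ modulo $8$. On the other hand, $j^2\equiv 0$ or $1\pmod 4$, so $-2j^2\equiv 0$ or $6\pmod 8$. Adding, every exponent lies in $\{0,2,6\}+$ nothing else, precisely $\{0,6\}\cup\{2,0\}=\{0,2,6\}$ modulo $8$, and never in the class $4$. Consequently each term is annihilated by $S_{8,4}$, which gives $P(q)\big|_{S_{8,4}}=0$.

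I expect no serious obstacle here. The only points needing care are the bookkeeping of parities, in particular that $4n\equiv 4\pmod 8$ exactly when $n$ is odd, and justifying the termwise sieving. Both are handled above.
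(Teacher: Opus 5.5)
Your proof is correct, but it starts from a different identity than the paper. The paper works with the full Bailey-transform expansion (equation (32) of \cite{AndrewsBachraoui2}), $P(q)=\sum_{h\in\mathbb{Z}}\sum_{s\ge0}(-1)^{h+s}q^{E(|h|,s)}\bigl(1+q^{2|h|+4s+2}\bigr)$. It then runs a four-case analysis over $R=|h|\bmod 4$ and $s\bmod 4$ to show that neither $E(R,s)$ nor $F(R,s)$ is $\equiv 4\pmod 8$. You instead use the Hecke-type formula for the even part alone, which is enough because the class $4\pmod 8$ is even.

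The two representations are the same series after reindexing. Put $h=2j$ and $n=s+|j|$; then $E(2|j|,s)=6n^2+4n-2j^2$ and $F(2|j|,s)=6n^2+8n+2-2j^2$, with matching signs $(-1)^{h+s}=(-1)^{n+j}$. Odd $h$ only produces exponents $\equiv 3\pmod 4$.

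Your coordinates make the quadratic form diagonal. The residue modulo $8$ therefore splits as $\{0,2\}+\{0,6\}=\{0,2,6\}$, with no cross term like $6Rs$ to track, which is noticeably shorter than the paper's case tables.

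The paper's coordinates give uniform control of the whole series, including the odd part. That matters for the modulo $16$ theorem: the classes $1,5,9,13\pmod{16}$ concern odd coefficients, which the even-part Hecke formula does not see.

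Both routes ultimately rest on Andrews--Bachraoui's Bailey-pair results, since the Hecke formula is exactly the even-$h$ part of (32). Citing their proof rather than the modular-completion proof of \cite{BanerjeeBringmann} keeps your argument in the same elementary spirit as the paper.
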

\begin{theorem}
We have
\[
[q^n]P(q)=0, \quad
n\equiv 1,4,5,6,9,12,13 \pmod{16}.
\]
\end{theorem}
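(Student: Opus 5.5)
The plan is to split the seven residue classes according to which earlier result handles them, and then treat the one remaining class by a direct residue computation on the Hecke-type formula. The classes $1,5,9,13 \pmod{16}$ are exactly the lifts of $1\pmod 4$, so they follow immediately from the corollary $P(q)|_{S_{4,1}}=0$ of \cite{AndrewsBachraoui2}. The classes $4,12\pmod{16}$ are exactly the lifts of $4\pmod 8$, so they follow from Theorem~\ref{main}. Only the class $6\pmod{16}$ is genuinely new.

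For $n\equiv 6\pmod{16}$, which is even, I will use the Hecke-type identity (conjectured by Andrews--Bachraoui and proved in \cite{BanerjeeBringmann,AndrewsBachraoui2}):
\[
P(q)|_{S_{2,0}}=\sum_{n\ge0}(-1)^n q^{6n^2+4n}\bigl(1+q^{4n+2}\bigr)\sum_{|j|\le n}(-1)^j q^{-2j^2}.
\]
Every exponent on the right has one of the two forms
\[
E_1=2\bigl(3n^2+2n-j^2\bigr),\qquad E_2=2\bigl(3n^2+4n+1-j^2\bigr).
\]
It therefore suffices to show that neither $3n^2+2n-j^2$ nor $3n^2+4n+1-j^2$ is ever $\equiv 3\pmod 8$. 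If so, no monomial $q^{m}$ with $m\equiv 6\pmod{16}$ appears on the right, and the coefficient vanishes without needing any cancellation argument.

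This is a finite check modulo $8$. Running over $n\bmod 8$, I find that $3n^2+2n$ takes only the values $\{0,1,5\}\pmod 8$, and so does $3n^2+4n+1$. Meanwhile $j^2\in\{0,1,4\}\pmod 8$. The possible differences $a-b$ with $a\in\{0,1,5\}$ and $b\in\{0,1,4\}$ are $\{0,7,4,1,5\}\pmod 8$, and this set never contains $3$. That completes the class $6\pmod{16}$.

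The only step needing care is this residue table, together with confirming that the summation ranges impose no parity coupling between $n$ and $j$ that I might have overlooked. Since the check covers all pairs of residues $(n\bmod 8,\ j^2\bmod 8)$ independently, any coupling could only shrink the set of attained residues, so the conclusion is unaffected. As a consistency check, the same table also recovers Theorem~\ref{main}: an exponent $\equiv 4\pmod 8$ would require a value $\equiv 2\pmod 4$, which $\{0,7,4,1,5\}\bmod 4=\{0,3,1\}$ excludes.
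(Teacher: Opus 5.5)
Your proof is correct, but it takes a different route from the paper's.

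\textbf{The paper's route.} It works from the single Bailey-transform expansion
\[
P(q)=\sum_{h}\sum_{s\ge 0}(-1)^{h+s}q^{E(|h|,s)}\bigl(1+q^{2|h|+4s+2}\bigr)
\]
and builds the full $8\times 8$ tables of $E(R,s)$ and $F(R,s)$ modulo $16$. For odd $R$ every exponent is $\equiv 3 \pmod 4$; this re-derives the $S_{4,1}$ vanishing rather than citing it. For even $R$ the only residues are $0,2,8,10,14 \pmod{16}$. That one table disposes of $4$, $6$ and $12$ together, without appealing to Theorem~\ref{main}.

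\textbf{Your route.} You lift the already established statements $P(q)|_{S_{4,1}}=0$ and Theorem~\ref{main} to dispose of six of the seven classes. Only $6 \pmod{16}$ is left, and you treat it with the Hecke-type formula for $P(q)|_{S_{2,0}}$ instead of the $(R,s)$ expansion. Your residue data are right. Both $3n^2+2n$ and $3n^2+4n+1$ take only the values $\{0,1,5\}$ modulo $8$, and $j^2$ takes only $\{0,1,4\}$. Your point that the coupling $|j|\le n$ can only shrink the attained set is also correct. The even residues you obtain, $2\cdot\{0,1,4,5,7\}=\{0,2,8,10,14\} \pmod{16}$, agree exactly with the paper's even-$R$ tables.

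\textbf{What each approach buys.} Your argument needs a much smaller finite check, and it shows that $6 \pmod{16}$ is the only content not already implied by the modulo $4$ and modulo $8$ results. It does rest on three external inputs, including the Hecke identity. The paper's argument depends on one identity only. It also produces the complete residue picture modulo $16$, including which odd classes actually occur, at the cost of working through two $8\times 8$ tables.
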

\section{Preliminary Facts}
\noindent Andrews and Bachraoui considered a generalized version of $S(q)$ in ~\cite{AndrewsBachraoui} by adding in a parameter $a$ and defining 
$$S_a(q):=\sum_{m\ge 0}\frac{(aq;q^2)_m(q/a;q^2)_m q^{2m}}{(-q^2;q^2)_m}, \qquad P_a(q)=(q^4;q^4)_\infty S_a(q).$$
From ~\cite[Equation 32]{AndrewsBachraoui2}, we get 
$$P_a(q)=\sum_{R\ge 0}\alpha_R(a)\gamma_R=\sum_{h\in \mathbb{Z}} (-1)^h a^h q^{h^2} \gamma_{|h|},$$
where $$ \gamma_R=q^{2R}\sum_{s\ge 0}(-1)^s q^{6s^2+(6R+4)s}(1+q^{2R+4s+2}).$$
Substituting the expression of $\gamma_R$ above, we obtain
$$P_a(q)=\sum_{h\in \mathbb{Z}}\sum_{s=0}^\infty (-1)^h a^h q^{h^2+2|h|+6s^2+(6|h|+4)s}(1+q^{2|h|+4s+2}),$$
and now expanding the parenthesis, we get
$$E(|h|,s)=h^2+2|h|+6s^2+(6|h|+4)s,$$
$$F(|h|,s)=E(R,s)+2|h|+4s+2.$$
Set \(R=|h|\), we obtain
 \begin{equation}
     E(R,s)=R^2+2R+6s^2+(6R+4)s,
 \end{equation}
 \begin{equation}
    F(R,s)=R^2+4R+6s^2+(6R+8)s+2 .
 \end{equation}
\section{Residue Restrictions for 
\texorpdfstring{$E(R,s)$ and $F(R,s)$}{E(R,s) and F(R,s)}
Modulo \texorpdfstring{$8$}{8}}
\noindent In this section, we will prove that $E(R,s), F(R,s)\not \equiv 4 \pmod 8$.
 \begin{lemma}
 For $R\ge 0$ and $s\ge 0$
     $$E(R,s) \not \equiv 4\pmod{8}.$$
 \end{lemma}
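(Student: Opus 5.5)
The plan is a direct parity case analysis on $R$ and $s$, reducing $E(R,s)$ modulo $8$ step by step.

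\textbf{Case $R$ odd.} Here $R^2+2R$ is odd. The remaining terms $6s^2+(6R+4)s$ are even, so $E(R,s)$ is odd and in particular $E(R,s)\not\equiv 4\pmod 8$.

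\textbf{Case $R$ even.} Write $R=2t$. Then
\[
E(R,s)=4t^2+4t+6s^2+12ts+4s=4t(t+1)+6s^2+12ts+4s .
\]
Since $t(t+1)$ is even, the first summand vanishes modulo $8$, leaving $E\equiv 6s^2+12ts+4s\pmod 8$. We now split on the parity of $s$.
\begin{itemize}
\item If $s=2u$ is even, every remaining term is divisible by $8$: $24u^2$, $24tu$ and $8u$. Hence $E\equiv 0\pmod 8$.
\item If $s$ is odd, then $s^2\equiv 1\pmod 8$, so $6s^2\equiv 6$. Also $12ts\equiv 4ts\equiv 4t\pmod 8$, and $4s\equiv 4\pmod 8$. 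Thus $E\equiv 6+4t+4\equiv 2+4t\pmod 8$, which is $2$ or $6$ according to the parity of $t$.
\end{itemize}
In every case $E(R,s)\in\{0,2,6\}\pmod 8$ or $E(R,s)$ is odd, so the residue $4$ never occurs.

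There is no serious obstacle. The only point needing care is the odd-$s$ subcase, where one must correctly reduce $12ts$ to $4t$ modulo $8$ and combine the constants $6+4$. As a sanity check, $(R,s)=(0,1)$ gives $E=10\equiv 2$, and $(R,s)=(2,1)$ gives $E=34\equiv 2$, consistent with $2+4t$ for $t=0,1$ (since $4t\equiv 4$ and $6+4+4=14\equiv 6$ when $t=1$, one should recheck this against $E(2,1)$ directly). I would resolve such arithmetic by brute checking all residues $R\bmod 8$, $s\bmod 4$, since $E \bmod 8$ depends only on these.
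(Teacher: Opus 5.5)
Your proof is correct and is essentially the paper's direct residue case analysis, organized more economically: you dispose of odd $R$ by parity alone (the paper instead computes the exact classes $\{3,7\}$ from tables over $R \bmod 4$ and $s \bmod 4$), and you treat the paper's cases $R\equiv 0,2 \pmod 4$ together via $R=2t$, which yields exactly the paper's sets $\{0,2\}$ and $\{0,6\}$. The only slip is in your sanity check: $E(2,1)=8+6+16=30\equiv 6 \pmod 8$, not $34$, so it agrees with $2+4t$ at $t=1$ and there is no inconsistency to resolve.
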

 \begin{proof}
Let $R\equiv \rho \pmod4 $, $\rho \in\{0,1,2,3\}$ and we have,
$$E(R,s)=(R^2+2R)+6s^2+(6R+4)s.$$
Now, we compute \(R^2+2R \pmod 8\). Since,
\[
R(R+2)\equiv
\begin{cases}
0 \pmod{8}, & R\equiv 0 \pmod{2}\\[2mm]
3\text{ or }7 \pmod{8}, & R\equiv 1 \pmod{2},
\end{cases}
\]
and \(R^2+2R \pmod 8\) depends only on \(R \pmod 4\), we have
\[
\begin{array}{c|c}
R \bmod 4 & R^2+2R \bmod 8\\
\hline
0 & 0\\
1 & 3\\
2 & 0\\
3 & 7
\end{array}
\]
also,
\[
(R+4)^2+2(R+4)\equiv R^2+2R \pmod{8}.
\]
Now, consider the residues of $6s^2+cs \pmod{8},$
$$6R+4\equiv4,10,16,22 \equiv 4,2,0,6 \pmod{8}$$ for $R=0,1,2,3$ so the relevant values of $c$ are $0,2,4,6.$
\[
\begin{array}{c|cccc}
c \bmod 8 & 0 & 2 & 4 & 6 \\ \hline
6s^2+cs \bmod 8 & 0 & 6 & 0 & 6 \\
                & 0 & 0 & 4 & 4 \\
                & 0 & 2 & 0 & 2 \\
                & 0 & 4 & 4 & 0
\end{array}
\]
with
\[
s \equiv 0,1,2,3 \pmod{4}.
\]
Also,
\[
6(s+4)^2+c(s+4)
\equiv 6s^2+48s+96+cs+4c
\equiv 6s^2+cs \pmod{8}.
\]
Thus, the expression is periodic modulo \(4\) in \(s\).
Now let,
\[
R\equiv \rho\pmod{4}, \quad \rho\in\{0,1,2,3\},
\]
and consider the following cases:
\begin{case}
 \(\rho\equiv 0 \pmod{4}\Rightarrow R\equiv 0 \pmod{4}\) \(\Rightarrow R\equiv 0 \pmod{2}\).
Thus,
\[
R^2+2R \equiv 0 \pmod{8}
\quad \text{and} \quad
6R+4 \equiv 4 \pmod{8}.
\]

With \(c=4\),
\[
6s^2+4s \equiv
\begin{cases}
0 \pmod 8, & \text{if } s \text{ is even},\\[2mm]
2 \pmod 8, & \text{if } s \text{ is odd}.
\end{cases}
\]
Indeed, if \(s=2k+1\), then
\[
6s^2+4s = 6(2k+1)^2 + 4(2k+1)
= 8(3k^2+4k+1)+2 \equiv 2 \pmod 8.
\]
Thus
\[
E \equiv \{0, 2\} \pmod 8.
\]
\end{case}
\begin{case}
$\rho \equiv 1 \pmod 4 \Rightarrow R\equiv 1 \pmod{4}.$
So,
\[R^2+2R \equiv 3 \pmod{8}\]
\[ \text{and }\;\;
6R+4 \equiv 6\cdot 1+4 = 10 \equiv 2 \pmod{8}.
\]
With \(c=2\),
\[
6s^2+2s = 2s(3s+1).
\]
Evaluating \(s=4k+i\), where \(i=0,1,2,3\), we get
\begin{align*}
s &\equiv 0 \pmod{4}: \qquad 2(4k)(12k+1) = 8k(12k+1) \equiv 0 \pmod{8}, \\
s &\equiv 1 \pmod{4}: \qquad 2(4k+1)(12k+4) \equiv 0 \pmod{8}, \\
s &\equiv 2 \pmod{4}: \qquad 2(4k+2)(12k+7) \equiv 4 \pmod{8}, \\
s &\equiv 3 \pmod{4}: \qquad 2(4k+3)(12k+10) \equiv 4 \pmod{8}.
\end{align*}
So,
\[
6s^2+2s \equiv 0 \pmod{8}
\quad \text{for } s\equiv 0,1 \pmod{4},
\]
and
\[
6s^2+2s \equiv 4 \pmod{8}
\quad \text{for } s\equiv 2,3 \pmod{4}.
\]
Hence,
\[
E \equiv 3+\{0,4\}\equiv \{3,7\}\pmod{8}.
\]
\end{case}
\begin{case}
$\rho\equiv2\pmod 4 \implies R\equiv 2\pmod 4.$ \text{So,}
\[R^2+2R \equiv 0 \pmod{8}\; 
\text{ and}\;\; 6R+4 \equiv 0 \pmod{8}.
\]
For \(c=0\),
\[
6s^2 \equiv 0 \pmod{8} \quad \text{for } s \text{ even},
\]
and
\[
6s^2 \equiv 6 \pmod{8} \quad \text{for } s \text{ odd}.
\]
Hence
\[
E \equiv 0 + \{0,6\}\equiv \{0,6\} \pmod{8}.
\]
\end{case} 
\begin{case}    
 $\rho\equiv 3 \pmod 4 \implies R \equiv 3 \pmod 4.$ Therefore,
\[
R^2+2R \equiv 7 \pmod{8}
\text{ and }
6R+4 \equiv 6 \pmod{8}.
\]
Taking \(c=6\) we get,
\[
6s^2+6s = 6s(s+1).
\]
Since \(s(s+1)\) is even, writing
\[
s(s+1)=2t,\qquad t\in \mathbb{N}.
\]
Thus,
\[
6s(s+1)=12t \equiv 4t \pmod{8}.
\]
Now
\[
t=\frac{s(s+1)}{2},
\]
and the values depend on \(s \pmod{4}\):
\[
\begin{array}{c|c}
s \pmod{4} & \dfrac{s(s+1)}{2} \pmod{2} \\ \hline
0,3 & 0 \\
1,2 & 1
\end{array}
\]
Therefore,
\[
12t \equiv 0 \pmod{8} \quad \text{if } s\equiv 0,3 \pmod{4},
\]
and
\[
12t \equiv 4 \pmod{8} \quad \text{if } s\equiv 1,2 \pmod{4}.
\]
So
\[
E \equiv 7+\{0,4\}=\{7,3\}\pmod{8},
\]\end{case}
\noindent Hence, by the above four cases we can conclude that\[
E\not \equiv 4 \pmod{8}.
\]
\end{proof}
\setcounter{case}{0}
\begin{lemma}
For $R\ge 0$ and $s\ge 0$
\[
F(R,s)=R^2+4R+6s^2+(6R+8)s+2 \not\equiv 4 \pmod{8}.
\]
\end{lemma}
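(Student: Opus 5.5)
The plan is to mirror the proof of the preceding lemma for $E(R,s)$: reduce $F(R,s)$ modulo $8$ and check a finite table of residues. First I would note that, modulo $8$, the term $8s$ disappears, so
\[
F(R,s)\equiv R^2+4R+2+6s^2+6Rs \pmod 8 .
\]
Next I would check periodicity, exactly as was done for $E$. Replacing $R$ by $R+4$ changes $R^2+4R$ by $8R+32\equiv 0$ and changes $6Rs$ by $24s\equiv 0$. Replacing $s$ by $s+4$ changes $6s^2$ by $48s+96\equiv 0$ and changes $6Rs$ by $24R\equiv 0$. Hence $F \bmod 8$ depends only on $(R \bmod 4,\ s \bmod 4)$, and it suffices to do a case analysis on $\rho=R \bmod 4$.

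The odd cases $\rho=1,3$ are disposed of by parity alone. Then $R^2+4R\equiv 5 \pmod 8$ in both cases, and $6s^2+6Rs=6s(s+R)$ is divisible by $4$, since $s(s+R)$ is even when $R$ is odd. So
\[
F\equiv 7+\{0,4\}\equiv\{3,7\}\pmod 8,
\]
which is odd and in particular never $4$.

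For the even cases I would compute directly.
\begin{itemize}
\item If $\rho=0$, then $R^2+4R\equiv 0$ and $6Rs\equiv 0$, so $F\equiv 2+6s^2$. This is $2$ for $s$ even and $2+6=8\equiv 0$ for $s$ odd, giving $F\in\{0,2\}\pmod 8$.
\item If $\rho=2$, then $R^2+4R\equiv 4+8\equiv 4$ and $6Rs\equiv 12s\equiv 4s$, so $F\equiv 6+6s^2+4s$. For $s$ even, $6s^2+4s\equiv 0$ (check $s=0,2$), giving $F\equiv 6$. For $s$ odd, $6s^2+4s\equiv 6+4=10\equiv 2$, giving $F\equiv 0$. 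Thus $F\in\{0,6\}\pmod 8$.
\end{itemize}

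Combining the four cases, $F(R,s)\not\equiv 4 \pmod 8$. The only real point of care is the reduction of $R^2+4R$ in the case $\rho=2$, where the value is $4$ rather than $0$ and the constant term $2$ then shifts it to $6$. I would double-check that case, together with the $s$-parity split there, by plugging in representatives $(R,s)\in\{2\}\times\{0,1,2,3\}$. Alternatively, as a cross-check, one can use $F=E+2R+4s+2$ together with the case tables from the previous lemma.
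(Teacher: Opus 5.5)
Your proof is correct and follows essentially the same route as the paper: you reduce modulo $8$, observe that $F$ depends only on $(R \bmod 4,\ s \bmod 4)$, and run through the four residues of $R$. Your values $\{0,2\}$ for $\rho=0$ and $\{0,6\}$ for $\rho=2$ agree with the paper's. The one small difference is that you handle $\rho=1,3$ together by noting that $s(s+R)$ is even, so $F$ is odd. The paper instead treats those two cases separately using its table of $6s^2+cs \pmod 8$.
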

\begin{proof}
We have,
\[
F(R,s)=\bigl(R^2+4R+2\bigr)+6s^2+(6R+8)s
\]
and
\[
R^2+4R+2=(R+2)^2-2,
\]
which depends only on \(R \pmod 4\).
\[
\begin{array}{c|cccc}
R \bmod 4 & 0 & 1 & 2 & 3 \\ \hline
R^2+4R+2 \bmod 8 & 2 & 7 & 6 & 7
\end{array}
\]
Now, consider the following cases:
\begin{case}
$R\equiv0 \pmod 4$
$$R^2+4R+2 \equiv 2 \pmod{8}$$ 
and 
\[6R+8 \equiv0\pmod{8}.\]
So taking  c=0,
\[
6s^2 \equiv 0,6 \pmod{8}
\]
\text{according as } $s$ \text{is even or odd. Hence,}
\[
F \equiv 2+\{0,6\}=\{2,0\} \pmod{8}.
\]
\end{case}
\begin{case}
$R\equiv 1 \pmod 4$
$$R^2+4R+2 \equiv 7 \pmod{8}$$
and
\[
6R+8 \equiv 14 \equiv 6 \pmod{8}.
\]
\text{So taking } c=6
\[
6s^2+6s=6s(s+1).
\]
Similar to the case for $\rho\equiv 3 \pmod 4$ in Lemma 1,
\[
F \equiv 7+\{0,4\}=\{7,3\} \pmod{8}.
\]
\end{case}
\begin{case}
$R \equiv 2 \pmod4$
$$R^2+4R+2 \equiv 6 \pmod{8}$$
\[
6R+8 \equiv 20 \equiv 4 \pmod{8}.
\]
\text{So taking } c=4
and proceeding similar to $\rho \equiv 0 \pmod 4$ in Lemma 1
\[
F \equiv 6+\{0,2\}=\{6,0\} \pmod{8}.
\]
\end{case} 
\begin{case}
$R\equiv 3 \pmod 4$
$$R^2+4R+2 \equiv 7 \pmod{8}$$
\[
6R+8 \equiv 26 \equiv 2 \pmod{8}
\]
\text{So taking } c=2
and proceed similar to $\rho \equiv 1 \pmod 4$ in Lemma 1
\[
F \equiv 7+\{0,4\}=\{7,3\} \pmod{8}.
\]
\end{case}
\noindent Hence, by the above four cases we can conclude that\[
F\not \equiv 4 \pmod{8}.
\]
\end{proof}
\section{Proof of Vanishing Theorem}
\noindent Summarizing the residue classes, we obtain
\vspace{0.5mm}
\[
\begin{array}{c|c|c|c}
R \bmod 4 & E(R,s)\bmod 8 & F(R,s)\bmod 8 & \text{corresponding classes} \\ \hline
0 & \{0,2\} & \{0,2\} & S_{8,0},\, S_{8,2} \\
1 & \{3,7\} & \{3,7\} & S_{8,3},\, S_{8,7} \\
2 & \{0,6\} & \{0,6\} & S_{8,0},\, S_{8,6} \\
3 & \{3,7\} & \{3,7\} & S_{8,3},\, S_{8,7}
\end{array}
\]
\vspace{0.5mm}
\noindent From the expansion
\[
P(q)=\sum_{h\in\mathbb Z}\sum_{s\ge 0}(-1)^{h+s}
q^{E(|h|,s)}\bigl(1+q^{2|h|+4s+2}\bigr),
\]
every exponent occurring in P(q) is of the form $E(R,s)$ or $F(R,s)$
\[
q^{E(R,s)} \quad \text{or} \quad q^{F(R,s)},
\]
where \(R=|h|\ge 0\) and
\[
F(R,s)=E(R,s)+2R+4s+2.
\]
Hence, if \([q^m]P(q)\neq 0\), then there exist \(R,s\ge 0\) such that
\(m=E(R,s)\) or \(m=F(R,s)\).
By Lemma 1 and Lemma 2, neither \(E(R,s)\) nor \(F(R,s)\) can be congruent to \(4 \pmod{8}\).
Therefore,
\[
[q^m]P(q)=0
\qquad \text{for all } m\equiv 4 \pmod{8}.
\]
\[
\text{It follows } P(q)\big|_{S_{8,4}}=0.
\]
\begin{remark}
The proof is purely residue-theoretic. Once the exponents \(E(R,s)\) and
\(F(R,s)\) are shown to avoid the class \(4 \pmod 8\), the vanishing of
\([q^m]P(q)\) for \(m\equiv 4 \pmod 8\) follows immediately.
\end{remark}
\section{A modulo 16 refinement}
\noindent We keep the notation
\[
E(R,s)=R^2+2R+6s^2+(6R+4)s,
\qquad
F(R,s)=R^2+4R+6s^2+(6R+8)s+2.
\]

\begin{lemma}
If \(R\) is odd, then
\[
E(R,s)\equiv F(R,s)\equiv 3 \pmod 4.
\]
\end{lemma}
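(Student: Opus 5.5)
Since the statement only concerns residues modulo $4$, the quickest route is to reduce the explicit formulas for $E(R,s)$ and $F(R,s)$ modulo $4$ directly, rather than passing through the modulo $8$ tables of Lemmas 1 and 2 (although those tables already imply the claim: in the cases $R\equiv 1,3 \pmod 4$ they give residues in $\{3,7\}$ modulo $8$, hence $3 \pmod 4$). I will give the direct computation, because it is self-contained and it sets up the notation for the modulo $16$ refinement that follows.

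Write $R=2k+1$. For $E$, we have $R^2+2R=(R+1)^2-1=4(k+1)^2-1\equiv 3 \pmod 4$. For the $s$-dependent part,
\[
6s^2+(6R+4)s = 6s^2+(12k+10)s \equiv 2s^2+2s = 2s(s+1)\equiv 0 \pmod 4,
\]
because $s(s+1)$ is even. Hence $E(R,s)\equiv 3 \pmod 4$.

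For $F$, the key step is to use the relation $F(R,s)=E(R,s)+2R+4s+2$. When $R$ is odd, $2R+2=2(R+1)\equiv 0\pmod 4$, and clearly $4s\equiv 0 \pmod 4$. So $F(R,s)\equiv E(R,s)\equiv 3\pmod 4$. As a check, one can also reduce $F$ directly: $R^2+4R+2=(R+2)^2-2\equiv 1-2\equiv 3 \pmod 4$ for odd $R$, and $6s^2+(6R+8)s\equiv 2s^2+2Rs\equiv 2s(s+R)\equiv 2s(s+1)\equiv 0 \pmod 4$.

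No real obstacle is expected here. The only points that need care are reducing the coefficient $6R+4$ (respectively $6R+8$) modulo $4$ correctly for odd $R$, and noting the parity of $s(s+1)$. Everything else is routine arithmetic.
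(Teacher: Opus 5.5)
Your proof is correct and follows essentially the same route as the paper: write $R$ odd, reduce $R^2+2R\equiv 3$ and $6s^2+(6R+4)s\equiv 2s(s+1)\equiv 0 \pmod 4$, then use $F-E=2R+4s+2\equiv 0 \pmod 4$. Your direct reduction of $F$ and the remark that the modulo $8$ tables of Lemmas 1 and 2 already imply the claim are both valid, but the paper's proof does not need them.
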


\begin{proof}
Write \(R=2m+1\). Then
\[
R^2+2R=(2m+1)^2+2(2m+1)=4m^2+8m+3\equiv 3 \pmod 4.
\]
Also, since \(R\) is odd,
\[
6R+4\equiv 2 \pmod 4.
\]
Hence
\[
6s^2+(6R+4)s \equiv 2s^2+2s = 2s(s+1)\equiv 0 \pmod 4,
\]
because \(s(s+1)\) is always even. Therefore
\[
E(R,s)\equiv 3 \pmod 4.
\]
Finally,
\[
F(R,s)-E(R,s)=2R+4s+2\equiv 2+0+2\equiv 0 \pmod 4,
\]
so \(F(R,s)\equiv 3 \pmod 4\) as well.
\end{proof}

\begin{lemma}
Let \(R\equiv \rho \pmod 8\) and \(s\equiv \sigma \pmod 8\). Then
\[
E(R,s)\equiv E_{\rho,\sigma} \pmod{16},
\]
where the residues \(E_{\rho,\sigma}\) are listed in the following table
\[
\begin{array}{c|cccccccc}
\rho\backslash \sigma & 0 & 1 & 2 & 3 & 4 & 5 & 6 & 7\\ \hline
0 & 0 & 10 & 0 & 2 & 0 & 10 & 0 & 2\\
1 & 3 & 3 & 15 & 7 & 11 & 11 & 7 & 15\\
2 & 8 & 14 & 0 & 14 & 8 & 14 & 0 & 14\\
3 & 15 & 11 & 3 & 7 & 7 & 3 & 11 & 15\\
4 & 8 & 10 & 8 & 2 & 8 & 10 & 8 & 2\\
5 & 3 & 11 & 15 & 15 & 11 & 3 & 7 & 7\\
6 & 0 & 14 & 8 & 14 & 0 & 14 & 8 & 14\\
7 & 15 & 3 & 3 & 15 & 7 & 11 & 11 & 7
\end{array}
\pmod{16}.
\]
\end{lemma}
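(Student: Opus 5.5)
The plan is a finite check. First show that $E(R,s)\bmod 16$ depends only on $(R\bmod 8,\,s\bmod 8)$. Then evaluate the $64$ representatives $0\le\rho,\sigma\le 7$ and compare with the table.

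\textbf{Step 1: periodicity.} Using (1) directly,
\[
E(R+8,s)-E(R,s)=16R+64+16+48s\equiv 0 \pmod{16},
\]
\[
E(R,s+8)-E(R,s)=96s+384+48R+32\equiv 0 \pmod{16}.
\]
So $E(R,s)\equiv E(\rho,\sigma)\pmod{16}$ whenever $R\equiv\rho$ and $s\equiv\sigma\pmod 8$. The lemma therefore reduces to checking $E_{\rho,\sigma}=E(\rho,\sigma)\bmod 16$ for $0\le\rho,\sigma\le7$.

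\textbf{Step 2: organise the computation.} To keep the check manageable, complete the square in $R$:
\[
E(R,s)=(R+3s+1)^2-(3s^2+2s+1).
\]
For fixed $\sigma$, the column $\rho\mapsto E_{\rho,\sigma}$ is then the list of squares $(\rho+3\sigma+1)^2\bmod 16$ shifted by the constant $c_\sigma=3\sigma^2+2\sigma+1\bmod 16$. These constants are
\[
(c_0,\dots,c_7)=(1,6,1,2,9,14,9,10).
\]
Odd squares are $1$ or $9\pmod{16}$ according as the base is $\pm1$ or $\pm3\pmod 8$. Even squares are $0$ or $4\pmod{16}$ according as the base is $\equiv 0$ or $2\pmod 4$. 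So each column reads off quickly from the residue of $\rho+3\sigma+1\pmod 8$.

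Spot checks agree with the stated table, for instance $E_{0,1}=6+4=10$, $E_{1,1}=3+6+10=19\equiv3$, $E_{2,0}=8$ and $E_{0,3}=54+12\equiv 2$. The table should also agree with Lemma 3: the odd rows must contain only residues $\equiv 3\pmod 4$, which is a useful consistency check.

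\textbf{Main obstacle.} The only real difficulty is the bookkeeping of the $64$ entries, where an arithmetic slip in a single cell is the likely failure. The completed-square form above is my first defence against this, since it reduces each column to a shift of a square table. If any entry disagrees with the printed table, I would recompute that cell directly from (1) as $\rho^2+2\rho+6\sigma^2+(6\rho+4)\sigma$ to decide which computation is correct.
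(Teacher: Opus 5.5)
Your approach is the paper's. Step 1 is the same reduction the paper makes by writing $R=8u+\rho$, $s=8v+\sigma$ and dropping multiples of $16$. After that the paper simply evaluates $\rho^2+2\rho+6\sigma^2+(6\rho+4)\sigma$ at the $64$ representatives. Your difference computations are correct, and so is the identity $E(R,s)=(R+3s+1)^2-(3s^2+2s+1)$. The printed table is also correct in all $64$ cells.

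The error is in the constants you list for the completed-square shortcut: two of them are wrong. Directly,
\[
c_5=75+10+1=86\equiv 6,\qquad c_7=147+14+1=162\equiv 2 \pmod{16},
\]
not $14$ and $10$. You seem to have assumed $c_{\sigma+4}\equiv c_\sigma+8$ throughout. In fact
\[
c_{\sigma+4}-c_\sigma=24\sigma+56\equiv 8(\sigma+1)\pmod{16},
\]
which vanishes for odd $\sigma$. The correct list is therefore $(1,6,1,2,9,6,9,2)$.

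With your values, every entry in columns $\sigma=5$ and $\sigma=7$ comes out off by $8$. For example:
\begin{itemize}
\item you would get $E_{0,5}\equiv 0-14\equiv 2$ instead of $E(0,5)=170\equiv 10$;
\item you would get $E_{2,5}\equiv 4-14\equiv 6$ instead of $238\equiv 14$.
\end{itemize}
The second is a residue that the modulo-$16$ vanishing theorem needs to be absent from the even rows. So this slip would produce sixteen false mismatches, several of which matter downstream.

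Your fallback of recomputing mismatched cells directly from the definition of $E$ would rescue the proof. However, the Lemma 3 consistency check you propose cannot detect this kind of error, because an error of $8$ is invisible modulo $4$. It is also invisible modulo $8$, so Lemmas 1 and 2 would not catch it either. With the corrected constants, your completed-square scheme reproduces the table exactly.
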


\begin{proof}
Write \(R=8u+\rho\) and \(s=8v+\sigma\). Since we work modulo \(16\),
\[
(8u+\rho)^2+2(8u+\rho)\equiv \rho^2+2\rho \pmod{16},
\]
and
\[
6(8v+\sigma)^2+(6(8u+\rho)+4)(8v+\sigma)
\equiv 6\sigma^2+(6\rho+4)\sigma \pmod{16}.
\]
Therefore
\[
E(R,s)\equiv \rho^2+2\rho+6\sigma^2+(6\rho+4)\sigma \pmod{16}.
\]
Evaluating this expression for \(\rho,\sigma\in\{0,1,\dots,7\}\) gives the table above.
\end{proof}

\begin{lemma}
Let \(R\equiv \rho \pmod 8\) and \(s\equiv \sigma \pmod 8\). Then
\[
F(R,s)\equiv F_{\rho,\sigma} \pmod{16},
\]
where the residues \(F_{\rho,\sigma}\) are listed in the following table
\vspace{0.3mm}
\[
\begin{array}{c|cccccccc}
\rho\backslash \sigma & 0 & 1 & 2 & 3 & 4 & 5 & 6 & 7\\ \hline
0 & 2 & 0 & 10 & 0 & 2 & 0 & 10 & 0\\
1 & 7 & 11 & 11 & 7 & 15 & 3 & 3 & 15\\
2 & 14 & 8 & 14 & 0 & 14 & 8 & 14 & 0\\
3 & 7 & 7 & 3 & 11 & 15 & 15 & 11 & 3\\
4 & 2 & 8 & 10 & 8 & 2 & 8 & 10 & 8\\
5 & 15 & 11 & 3 & 7 & 7 & 3 & 11 & 15\\
6 & 14 & 0 & 14 & 8 & 14 & 0 & 14 & 8\\
7 & 15 & 7 & 11 & 11 & 7 & 15 & 3 & 3
\end{array}
\pmod{16}.
\]
\end{lemma}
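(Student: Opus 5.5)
The plan follows the proof of the previous lemma (the table for $E_{\rho,\sigma}$): first show that $F(R,s)\bmod 16$ depends only on $R\bmod 8$ and $s\bmod 8$, then evaluate the resulting polynomial on the $64$ residue pairs.

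\emph{Reduction step.} Write $R=8u+\rho$ and $s=8v+\sigma$ with $\rho,\sigma\in\{0,\dots,7\}$ and $u,v\ge 0$. Each piece of
\[
F(R,s)=R^2+4R+6s^2+(6R+8)s+2
\]
is reduced modulo $16$ separately.
\begin{itemize}
\item For the $R$-part, $(8u+\rho)^2+4(8u+\rho)=64u^2+16u\rho+\rho^2+32u+4\rho\equiv \rho^2+4\rho \pmod{16}$.
\item For the quadratic $s$-term, $6(8v+\sigma)^2=384v^2+96v\sigma+6\sigma^2\equiv 6\sigma^2 \pmod{16}$.
\item For the cross term, expand $(6(8u+\rho)+8)(8v+\sigma)$. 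The part $48u(8v+\sigma)$ is divisible by $16$. The part $(6\rho+8)\cdot 8v=48\rho v+64v$ is also divisible by $16$. What remains is $(6\rho+8)\sigma$.
\end{itemize}
Together these give
\[
F(R,s)\equiv \rho^2+4\rho+6\sigma^2+(6\rho+8)\sigma+2 \pmod{16}.
\]

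\emph{Evaluation step.} It remains to evaluate this polynomial for $\rho,\sigma\in\{0,\dots,7\}$. Two devices keep the $64$ evaluations organised.
\begin{itemize}
\item Compute $\rho^2+4\rho+2 \bmod 16$ once per row. By $R^2+4R+2=(R+2)^2-2$ this equals $(\rho+2)^2-2$.
\item Compute $6\sigma^2+c\sigma \bmod 16$ as a function of $\sigma$, for each value $c\equiv 6\rho+8\pmod{16}$.
\end{itemize}
Each table entry is then the sum of these two contributions modulo $16$.

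As an independent cross-check, use $F(R,s)=E(R,s)+2R+4s+2$. This gives
\[
F_{\rho,\sigma}\equiv E_{\rho,\sigma}+2\rho+4\sigma+2\pmod{16},
\]
so each row of the new table should be the corresponding row of the $E$-table shifted entrywise by $2\rho+4\sigma+2$. This is valid because $2R+4s \bmod 16$ depends only on $R\bmod 8$ and $s\bmod 4$.

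There is also a consistency check with the earlier lemma that $E,F\equiv 3\pmod 4$ for odd $R$. Every odd row of the table must contain only entries in $\{3,7,11,15\}$, and the even rows must reduce modulo $8$ to the classes found in Lemma~2.

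The only real obstacle is arithmetic accuracy across the $64$ entries; the algebra itself is routine. I would first spot-check a few entries by hand, for instance
\[
F_{0,0}=2,\qquad F_{0,1}=6+8+2=16\equiv 0,\qquad F_{1,0}=7,\qquad F_{1,1}=7+6+14=27\equiv 11,
\]
all of which agree with the table. I would then confirm the remaining entries via the shifted $E$-table identity, and finally check the built-in symmetries: each row has period dividing $8$ in $\sigma$, and rows pair up under $\rho\mapsto\rho+4$ up to the changes in the linear terms.
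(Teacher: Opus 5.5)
Your proposal is correct and matches the paper's proof. Both write $R=8u+\rho$, $s=8v+\sigma$ to reduce $F(R,s)$ modulo $16$ to $\rho^2+4\rho+2+6\sigma^2+(6\rho+8)\sigma$, then evaluate this on the $64$ residue pairs; the shift identity $F_{\rho,\sigma}\equiv E_{\rho,\sigma}+2\rho+4\sigma+2 \pmod{16}$ you add is a valid cross-check that the paper does not use.
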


\begin{proof}
Write \(R=8u+\rho\) and \(s=8v+\sigma\). Modulo \(16\),
\[
(8u+\rho)^2+4(8u+\rho)+2\equiv \rho^2+4\rho+2 \pmod{16},
\]
and
\[
6(8v+\sigma)^2+(6(8u+\rho)+8)(8v+\sigma)
\equiv 6\sigma^2+(6\rho+8)\sigma \pmod{16}.
\]
Hence
\[
F(R,s)\equiv \rho^2+4\rho+2+6\sigma^2+(6\rho+8)\sigma \pmod{16}.
\]
Evaluating the right-hand side for \(\rho,\sigma\in\{0,1,\dots,7\}\) yields the table.
\end{proof}

\begin{theorem}
We have
\[
[q^n]P(q)=0
\qquad
(n\equiv 1,4,5,6,9,12,13 \pmod{16}).
\]
\end{theorem}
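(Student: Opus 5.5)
The proof will follow the pattern of the proof of Theorem~\ref{main}: every exponent of $P(q)$ lies in the set $\{E(R,s),F(R,s): R,s\ge 0\}$, so it suffices to show that none of these integers lies in the listed residue classes modulo $16$. From the expansion
\[
P(q)=\sum_{h\in\mathbb Z}\sum_{s\ge 0}(-1)^{h+s}q^{E(|h|,s)}\bigl(1+q^{2|h|+4s+2}\bigr),
\]
if $[q^n]P(q)\neq 0$, then $n=E(R,s)$ or $n=F(R,s)$ for some $R,s\ge0$. Note that we do not need any cancellation argument. We only need to control the support of the exponents.

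We split the argument by the parity of $R$. If $R$ is odd, Lemma~3 gives $E(R,s)\equiv F(R,s)\equiv 3\pmod 4$. So these exponents are $\equiv 3,7,11,15\pmod{16}$, and none of them lies in $\{1,4,5,6,9,12,13\}$. If $R$ is even, we read off the rows $\rho=0,2,4,6$ of the tables in Lemma~4 and Lemma~5. Their union is
\[
\{0,2,8,10,14\}\pmod{16},
\]
since the entries there are only $0,2,8,10,14$. In particular, every exponent coming from even $R$ is even. It is also never $\equiv 4,6,12\pmod{16}$.

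Combining the two cases, the set of attainable exponents modulo $16$ is contained in $\{0,2,3,7,8,10,11,14,15\}$. Its complement is $\{1,4,5,6,9,12,13\}$, which is exactly the list in the statement. Therefore $[q^n]P(q)=0$ for those $n$. As a consistency check, the classes $4$ and $12$ recover Theorem~\ref{main}, and $1,5,9,13$ recover $P(q)|_{S_{4,1}}=0$.

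The main obstacle is making sure the tables in Lemmas~4 and~5 are correct in the even rows. The whole argument rests on the absence of $4,6,12$ there, so I would re-verify those rows by hand. For even $\rho=2k$, write
\[
E\equiv 4k^2+4k+6\sigma^2+(12k+4)\sigma \pmod{16}.
\]
Here $4k(k+1)\equiv 0\pmod 8$. Checking $\sigma$ modulo $4$ then shows the value is never $\equiv 4$ or $6\pmod 8$. The same check applies to $F$. This gives a table-free confirmation of the even case.
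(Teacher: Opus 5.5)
Your main argument is correct and is the paper's proof. You split on the parity of $R$, use Lemma~3 to put every odd-$R$ exponent in $3,7,11,15 \pmod{16}$, and read off from the even rows of the tables in Lemmas~4 and~5 that even-$R$ exponents lie in $\{0,2,8,10,14\}\pmod{16}$.

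The ``table-free confirmation'' in your last paragraph is wrong, though, and should be dropped or redone. You claim that for even $R$ the value is never $\equiv 6\pmod 8$. This is false: $E(2,1)=30\equiv 6\pmod 8$ and $F(2,0)=14\equiv 6\pmod 8$. You can already see this in Lemma~1, Case~3, and in your own list, since $14\equiv 6\pmod 8$.

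More fundamentally, no mod-$8$ check can confirm the absence of the class $6\pmod{16}$. The classes $6$ and $14$ coincide modulo $8$, and $14$ does occur. A correct mod-$8$ analysis only gives the even-$R$ residues $\{0,2,6\}\pmod 8$. That excludes $4$ and $12\pmod{16}$, which just recovers Theorem~1, but says nothing about $6$.

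To verify this class without the tables you must work modulo $16$. By Lemmas~1 and~2, an even-$R$ exponent is $\equiv 6\pmod 8$ only when $R=4j+2$ and one of the following holds.
\begin{itemize}
\item It is $E(R,s)$ with $s$ odd. Modulo $16$ you have $R^2+2R=8(2j+1)(j+1)\equiv 8(j+1)$, $6s^2\equiv 6$ and $(6R+4)s\equiv 8j$. Hence $E\equiv 16j+14\equiv 14\pmod{16}$.
\item It is $F(R,s)$ with $s=2t$ even. Modulo $16$ you have $R^2+4R+2=(R+2)^2-2\equiv 14$ and $6s^2+(6R+8)s\equiv 8t^2+8t\equiv 0$. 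Hence $F\equiv 14\pmod{16}$.
\end{itemize}
This is the genuinely mod-$16$ input behind excluding $6\pmod{16}$. Everything else in the even case already follows from the mod-$8$ lemmas.
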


\begin{proof}
Every monomial occurring in \(P(q)\) is of the form \(q^{E(R,s)}\) or
\(q^{F(R,s)}\), where \(R=|h|\ge 0\) and \(s\ge 0\). Therefore, if
\([q^n]P(q)\neq 0\), then \(n=E(R,s)\) or \(n=F(R,s)\) for some \(R,s\).
If \(R\) is odd, then by the previous lemma,
\[
E(R,s)\equiv F(R,s)\equiv 3 \pmod 4,
\]
the odd residue classes that can occur are
\[
3,7,11,15.
\]
Thus no coefficient can occur in the classes
\[
1,5,9,13 \pmod{16}.
\]
Now suppose \(R\) is even. Then \(R\bmod 8\in\{0,2,4,6\}\). From the tables above, the possible residues of \(E(R,s)\) and \(F(R,s)\) modulo \(16\) are
\[
0,2,8,10,14.
\]
In particular, the classes $4,6,12 \pmod {16}$ do not occur.
Combining the odd and even cases, no exponent occurring in \(P(q)\) can lie in
\[
1,4,5,6,9,12,13 \pmod{16}.
\]
Hence
\[
[q^n]P(q)=0
\qquad
(n\equiv 1,4,5,6,9,12,13 \pmod{16}).
\]
\end{proof}
\begin{remark}
The modulo \(8\) and modulo \(16\) vanishing results show that the support of
\(P(q)\) is highly restricted in low powers of \(2\).
\end{remark}
\begin{remark}
The same residue-support argument applies to \(P_a(q)\) as well.
\end{remark}

\section{Conclusion}
\noindent In this paper we recorded elementary residue restrictions arising from the Bailey-transform decomposition of Andrews and Bachraoui. Using the explicit exponent structure, we showed that the residue class $4 \pmod 8$ does not occur in the support of
\[
P(q):=(q^4;q^4)_\infty S(q),
\]
and obtained a refinement modulo $16$. The argument is entirely residue-theoretic and suggests that further support restrictions may be derived by similar methods. These results clarify the residue behavior of the two-color partition series and complement the earlier work of Andrews, Bachraoui, Banerjee, and Bringmann. It would be interesting to formulate a more general residue-support theorem for related Bailey-transform identities.
\section{Acknowledgment}
\noindent The author is grateful to Professor Rahul Kumar for suggesting this work and for reading an early version of the draft. The author was supported by the grant ANRF/ECRG/2024/003222/PMS of the Anusandhan National
Research Foundation (ANRF), Govt. of India as an ANRF project student.

\end{document}